\numberwithin{equation}{section} \hyphenation{semi-stable}
\definecolor{ffzzqq}{rgb}{1,0.6,0}
\definecolor{qqqqff}{rgb}{0,0,1}
\definecolor{ffqqqq}{rgb}{1,0,0}
\definecolor{wwzzqq}{rgb}{0.4,0.6,0}
\definecolor{zzwwff}{rgb}{0.6,0.4,1}
\pgfplotsset{compat=1.15}
\newcommand{\ZZ}{\mathbb{Z}}
\DeclareMathOperator{\Hom}{Hom}
\DeclareMathOperator{\Ann}{Ann}
\DeclareMathOperator{\reg}{reg}
 \def\cocoa{{\hbox{\rm C\kern-.13em
 o\kern-.07em C\kern-.13em o\kern-.15em A}}}
\newtheorem{theorem}{Theorem}[section]
\newtheorem{lemma}[theorem]{Lemma}
\newtheorem{proposition}[theorem]{Proposition}
\newtheorem{Problem}[theorem]{Problem}
 \theoremstyle{definition}
\newtheorem{definition}[theorem]{Definition} \theoremstyle{remark}
\newtheorem{remark}[theorem]{Remark}
\newtheorem{example}[theorem]{Example}
\definecolor{MyDarkGreen}{cmyk}{0.7,0,1,0}
\begin{document}

\title[Complete intersection algebras with  binomial Macaulay dual generator]{Complete intersection algebras with \\ binomial Macaulay dual generator}

\author[R.Di Gennaro]{Roberta Di Gennaro}
\address{Di Gennaro: Dipartimento di Matematica e Applicazioni “Renato Caccioppoli” \\
 Universit\`a degli Studi di Napoli Federico II\\
  80126 Napoli, Italy}
\email{digennar@unina.it}

\author[R.M. Miró-Roig]{Rosa M. Miró-Roig}
\address{Mir\'o-Roig: Department de Mathem\`atiques i Inform\`atica\\
  Universitat de Barcelona\\
  08007 Barcelona, Spain}
\email{miro@ub.edu, ORCID 0000-0003-1375-6547}

\thanks{The first author  was partially supported by GNSAGA- INdAM. The second author was supported by the grant PID2020-113674GB-I00 and by the program \lq\lq Professori Visitatori of GNSAGA- INdAM\rq\rq}

\subjclass[2020]{ Primary 14M10; Secondary 13E10, 13C40.}

 \keywords{Weak Lefschetz property, Strong Lefschetz property, complete intersection, Macaulay duality}

\begin{abstract} In this paper, we characterize all Artinian complete intersection $K$-algebras $A_F$ whose Macaulay dual generator $F$ is a binomial. In addition, we prove that  such 
 complete intersection Artinian $K$-algebras $A_F$ satisfy the Strong Lefschetz property.
\end{abstract}

\date{January 2025}

\maketitle

\section{Introduction}

Artinian Gorenstein standard graded $K$-algebras  (also known as Poincaré duality $K$-algebras)  have long been ubiquitous in many fields of mathematics, including algebraic geometry, commutative algebra, algebraic topology, combinatorics, etc.  Central to the study of Artinian Gorenstein  graded $K$-algebras is the concept of Macaulay dual generator. Indeed, 
Macaulay-Matlis  duality establishes a reversing one-to-one correspondence between  Artinian Gorenstein $K$-algebras $R/I$ and a single polynomial, their inverse system $I^{\perp}=<F>$ also called the Macaulay dual generator.  This duality not only  provides a parametrization of Artinian Gorenstein $K$-algebras $R/I$ but also translates algebraic properties of $R/I$ into questions about the homogeneous polynomial $F$, streamlining their exploration.  In this context and in a natural way we are led to pose the following question: To characterize the  homogeneous polynomials $F$ such that their Macaulay's dual $I=\Ann_R(F)$ is an Artinian complete intersection ideal.

So far, only a limited number of instances of this problem have been reported. It is widely known
that codimension 2 Artinian Gorenstein graded $K$-algebras and complete intersections coincide. We also know that if  $F$ is a monomial then $\Ann_R(F)$ is an Artinian  monomial complete intersection ideal. In \cite{HWW}
the case of quadratic complete intersections is considered while in 
 \cite{ADF+24}, the authors  initiated a systematic study for key properties of Artinian Gorenstein
$K$-algebras  $A_F=$$R/\Ann_R(F)$ having as a Macaulay dual generator  a binomial $F$. They were able to demonstrate that in codimension 3
 all such algebras satisfy the strong Lefschetz property (see Definition \ref{WLP+SLP}) and they established an explicit characterization of when they are complete intersection.

In this short note we are going to
investigate to what extent similar result holds for Artinian Gorenstein $K$-algebras of arbitrary codimension and Macaulay dual generator a binomial: Can we characterize {\em all} Artinian  complete intersection  ideal $I=\Ann_R(F)$
 having as a Macaulay dual generator a binomial $F$? Do they have SLP?

 Over the last several decades, this last question has been deeply studied in a much more general set-up giving rise to the problem  of  whether or not Artinian graded $K$-algebras have the WLP/SLP. If they do, it has strong implications for the Hilbert function;
 for example,  their Hilbert function is unimodal.
One of the most important kinds of $K$-algebras that have been intensively studied from the point of
view of WLP/SLP is the class of Artinian Gorenstein graded $K$-algebras and, in particular,  Artinian complete intersections graded $K$-algebras. In this paper, we give new contributions to the following problem:
Do {\em all}  Artinian complete intersections $K$-algebras have the WLP/SLP, in characteristic 0?

This is currently known only in codimension two  or three, although partial results in higher codiemnsion are also known. 
Let us quickly summarize what is known so far. 
A {\em general} complete intersection with fixed generator degrees
has the WLP/SLP and  {\em all} complete intersection in one and two
variables have the WLP. In \cite{HMNW}, Harima, Migliore, Nagel and Watanabe showed that any Artinian codimension  three complete intersection $K$-algebra  has the WLP. On the other hand, Stanley \cite{S} and Watanabe \cite{W} showed that \emph{any} monomial complete intersection, regardless of the number of variables, has the WLP/SLP. In spite of the great activity in this area the question whether {\em any} Artinian complete intersection $K$-algebra  satisfies the WLP/SLP  remains open. In this paper, we extent Watanabe and Stanley result for Artinian complete intersections with Macaulay dual generator a monomial to Artinian complete intersection with Macaulay dual generator a binomial with the hope that our approach will provide new insight in this long standing open problem of characterizing the Macaulay dual generators of Artinian complete intersection $K$-algebras.

\vskip 2mm
Our paper is structured as follows: in Section 2,  we fix notation and we gather together the basic facts on Artinian Gorenstein $K$-algebras and Macaulay-Matlis duality needed in the sequel. Section 3 is the heart of the paper since it contains our main result, namely, the characterization of {\em all} binomials $F$ such that $\Ann_R(F)$ is a complete intersection ideal (see Theorem \ref{thm1}). In Section 4, we prove that all Artinian complete intersection $K$-algebras with Macaulay dual generator a binomial satisfy SLP (see Theorem \ref{thm2}). We end the paper with a short section where we collect some questions which naturally arise from our work.

\medskip \noindent  {\bf Acknowledgement.} 
The second author thanks the  Dipartimento di Matematica e Applicazioni “Renato Caccioppoli” of the 
 Universit\`a degli Studi di Napoli Federico II for the warm hospitality, where most of the research was performed.

\section{Notation and background}
 This section contains the basic definitions and results on Artinian Gorenstein $K$-algebras
 and it lays the groundwork for the results in the later sections.

Throughout this paper, $K$ will be an algebraically closed field of characteristic zero. 
Given a standard graded Artinian $K$-algebra $A=R/I$, where $R=K[x_1,\dots,x_n]$ and $I$ is a homogeneous ideal of $R$,
we denote the Hilbert function of $A$ by $HF_A\colon \mathbb{Z} \longrightarrow \mathbb{Z}$, with $HF_A(j)=\dim _K[A]_j=\dim _K[R/I]_j$. 
Since $A$ is Artinian, its Hilbert function is
encoded in its \emph{$h$-vector} $h=(h_0,\dots ,h_d)$, where $h_i=HF_A(i)>0$ and $d$ is the largest index with this property. The integer $d$ is called the \emph{socle degree of} $A$  or the {\em regularity} of $A$ and denoted $\reg(A)$. We refer to the number $n=\dim(R)$, which is equal to the height or codimension of $I$, also as the {\em codimension} of $A$.

 \subsection{Gorenstein algebras and Macaulay duality}\label{s: Macaulay duality}

A graded Artinian $K$-algebra $A$  with socle degree $d$ is said to be  {\em Gorenstein} if its
socle $Soc(A):=(0:m_A)$ is a one dimensional $K$-vector space whose elements have degree $d$.

Let $R=K[x_1,\ldots,x_n]$ be a polynomial ring and let $S=K[X_1,\ldots,X_n]$ be a divided power algebra (see \cite[Appendix A]{IK} or \cite[Appendix A.2.4]{Ei}), regarded as a $R$-module with the \emph{contraction} action
\[
x_i\circ X_j^k=
\begin{cases}X_j^{k-1}\delta_{ij} \  & \text{if} \ k>0\\ 0 & \text{otherwise,}\\
\end{cases}
\]
where $\delta_{ij}$ is the Kronecker delta. We regard $R$ as a graded $K$-algebra with $\deg X_i=\deg x_i$.

For each degree $i\geq 0$, the action of $R$ on $S$ defines a non-degenerate $K$-bilinear pairing
\begin{equation}
\label{eq:MDPairing}
    R_i \times S_i \longrightarrow K \text{ with } (f,F) \longmapsto f \circ F.
\end{equation}
Thus for each $i\geq 0$ we have an isomorphism of $K$-vector spaces $S_i\cong \Hom_K(R_i,K)$ given by $F\mapsto\left\{f\mapsto f\circ F\right\}$. The above perfect pairing also yields an isomorphism $A_i^*\cong A_{d-i}$. So, the h-vector of such $K$-algebras is symmetric.

 It is well known by work of F.\,S.\,Macaulay \cite{Mac} that an Artinian $K$-algebra $A=R/I$ is Gorenstein with socle degree $d$ if and only if  $$I=\Ann_R(F)=\{f\in R\mid f\circ F=0\}$$ for some homogeneous polynomial $F\in S_d$.  The polynomial $F$ is called  {\em Macaulay dual generator} for $A$; it is unique up to a scalar multiple. To emphasize the relationship between $F$ and the Artinian Gorenstein ring defined by its annihilator, we write $A_F=R/\Ann_R(F)$. 

By the non-degenerate $K$-bilinear pairing \eqref{eq:MDPairing}, we have:
$$
HF_A(t)=\dim_K(A_{d-t})=\dim_K \left\langle x_0^{i_0}\cdots x_N^{i_N} \circ F  : i_0+\cdots+i_N=t\right\rangle.
$$

We end this preliminary version  recalling a result  that will be strongly used in the next section.

 \begin{lemma}\label{lem: socle fits}
     Any surjective degree-preserving homomorphism $\pi:A\to B$ between Artinian Gorenstein $K$-algebras with the same socle degree is an isomorphism.
 \end{lemma}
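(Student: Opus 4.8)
The plan is to prove that $\pi$ is injective, since a bijective degree-preserving $K$-algebra homomorphism is automatically an isomorphism. Write $d$ for the common socle degree, and let $\mathfrak{m}_A$ be the irrelevant maximal ideal of $A$. The engine of the argument is the interplay between the one-dimensionality of the socle of a Gorenstein algebra and the fact that a surjection restricts to a surjection in each degree.

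First I would pin down where the socle lives. Because $A_{d+1}=0$, every element of the top-degree piece $A_d$ is annihilated by $\mathfrak{m}_A$, so $A_d\subseteq Soc(A)$; since $A$ is Gorenstein with socle degree $d$, the space $Soc(A)$ is one-dimensional and $A_d\neq 0$, whence $A_d = Soc(A)$ and $\dim_K A_d=1$. The identical reasoning gives $\dim_K B_d=1$. As $\pi$ is a degree-preserving surjection it induces surjections $A_i\twoheadrightarrow B_i$ in every degree $i$; in top degree this is a surjection between one-dimensional spaces, hence an isomorphism, so in particular $A_d\cap\ker\pi=0$.

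The key structural input is the standard fact that, in a graded Artinian algebra, every nonzero graded ideal meets the socle nontrivially. To see this for $J:=\ker\pi$, I would take a nonzero homogeneous element $x\in J$ and follow the descending chain $x,\ \mathfrak{m}_A x,\ \mathfrak{m}_A^2 x,\dots$, which must terminate at $0$ since $A$ is Artinian; the last nonzero term $\mathfrak{m}_A^{\,k}x$ is killed by $\mathfrak{m}_A$, hence lies in $Soc(A)$, while still lying in the ideal $J$. Thus $J\cap Soc(A)\neq 0$ whenever $J\neq 0$.

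Combining the two observations finishes the proof: if $\ker\pi$ were nonzero it would meet $Soc(A)=A_d$, and by one-dimensionality this would force $A_d\subseteq\ker\pi$, contradicting $A_d\cap\ker\pi=0$. Hence $\ker\pi=0$ and $\pi$ is an isomorphism. I expect the only delicate points to be the two places where Gorenstein-ness is genuinely used — namely that $Soc(A)$ is one-dimensional and concentrated in the single degree $d$, and that the socle is essential in the sense that it meets every nonzero ideal; everything else is formal.
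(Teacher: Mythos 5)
Your proof is correct and follows essentially the same route as the paper's: a nonzero element of $\ker\pi$ is pushed into the socle, which lies in the top degree $d$, contradicting the fact that the surjection $A_d\twoheadrightarrow B_d$ between one-dimensional spaces is an isomorphism. The only difference is that the paper simply asserts (from Gorenstein-ness) the existence of $g$ with $0\neq fg\in Soc(A)$, whereas you prove this essentiality of the socle via the descending chain $x,\mathfrak{m}_A x,\mathfrak{m}_A^2 x,\dots$, an argument valid in any Artinian algebra.
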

 \begin{proof}
Set $d=\reg(A)=\reg(B)$. Let $f\in \ker(\pi)$ and assume that $f\neq 0$. Since $A$ is an Artinian Gorenstein $K$-algebra, there exists $g\in A$ such that $fg\in Soc(A)$ and $fg\neq 0$.  By hypothesis $\pi$ is a degree-preserving homomorphism between two Artinian Gorenstein $K$-algebras $A$ and $B$ with socle degree $d$. Therefore,  $\pi$ induces an isomorphism $\pi:A_d\xrightarrow{\cong} B_d$ and, since $0\neq fg\in A_d$, we conclude  $0\neq \pi(fg)=\pi(f)\pi(g)$. This is a contradiction, because $\pi(f)=0$. Thus, $\ker(\pi)=0$ and hence $\pi$ is an isomorphism.
 \end{proof}


\section{Complete intersections with binomial dual generator}

In this section, we address the following longstanding open problem in commutative algebra (see \cite{ADF+24}, \cite{E}, \cite{HWW} and its references list):

\begin{Problem} \label{pblm}
    To characterize the homogeneous forms $F\in K[X_1,\cdots ,X_n]$  of degree $d$ whose annihilator $\Ann _R(F)$ is a complete intersection ideal.
\end{Problem}

If $F\in K[X_1,\cdots ,X_n]$ is a monomial, the answer is well known. In fact, for $F=X_1^{a_1}\cdots X_n^{a_n}$ we have $\Ann_R(F)=(x_1^{a_1+1},\cdots ,x_n^{a_n+1})$ and $A_F=R/\Ann_R(F)$ satisfies SLP (see \cite{S} and \cite{W}).
We will now tackle  the above problem for the first open case, namely, binomials.
Since  $K$ is algebraically closed, it contains all roots of unity and hence for any binomial $G\in S$ there is a binomial $F=m_1-m_2\in S$ so that $m_1,m_2$ are monomials and, moreover, $A_G\cong A_F$ (see \cite[Lemma 3.1]{ADF+24}). So, it is natural to try to answer the above question (Problem \ref{pblm}) for the first open case, namely, when $F$ is a binomial and characterize Artinian complete intersection $K$-algebras having as a Macaulay dual generator a binomial.

From now on, we fix a binomial $F=m_1-m_2\in K[X_1,\dots ,X_n]$ with $m_1, m_2$ monomials. After reordering the variables, if necessary, any binomial can be factored as 
\[
F=X_1^{a_1}\cdots X_n^{a_n}(X_1^{b_1}\cdots X_r^{b_r}-X_{r+1}^{b_{r+1}}\cdots X_n^{b_n}), \quad\text{ where } 1\le r\le n-1 \text{ and}
\]
\begin{align}\label{eq: binomial}
m_1 & =  X_1^{a_1+b_1}\cdots X_r^{a_r+b_r} X_{r+1}^{a_{r+1}}\cdots X_n^{a_n}, \nonumber \\
 m_2 & = X_1^{a_1}\cdots X_r^{a_r} X_{r+1}^{a_{r+1}+b_{r+1}}\cdots X_n^{a_n+b_n}, \nonumber \\
 g &=\gcd(m_1,m_2)  =   X_1^{a_1}\cdots X_n^{a_n} \nonumber \\
 d &= \sum _{i=1}^na_{i} + \sum_{i=1}^rb_{i} =  \sum _{i=1}^na_{i} + \sum_{i=r+1}^n b_{i}.
 \end{align}

 \begin{remark}
     First of all we observe that we can assume that $b_i\ne 0$ for any $0\le i \le n$. If, for instance, $b_1=0$ (analogous argument if $b_i=0$ for any other index $i>1$) we write $F=F_1F_2$ where $F_1=X_1^{a_1}$ and   $
F=X_2^{a_2}\cdots X_n^{a_n}(X_2^{b_2}\cdots X_{n-1}^{b_{n-1}}-X_n^{b_n}).
$ It holds:
$$A_F=A_{F_1}\otimes A_{F_2}=K[x_1]/(x_1^{a_1+1})\otimes _K K[x_2,\cdots ,x_n]/\Ann_R(F_2).$$ Therefore, $A_F$ is a complete intersection if and only if $A_{F_2}$ is a complete intersection. Thus,  we have to consider only the cases with all $b_i\ne 0$.
 \end{remark}

Our goal will be to determine in terms of $a_i$ and $b_i$ when the Artinian Gorenstein algebra $A_F$ with Macaulay dual generator a binomial $F$ is a complete intersection. In other words, our next theorem is a new contribution towards a characterization of the inverse systems of complete intersections  (see Problem \ref{pblm}).

\begin{theorem}\label{thm1}
  Assume $n\ge 3$ and  consider the binomial 
$$
F=X_1^{a_1}X_2^{a_2}\cdots X_n^{a_n}(X_1^{b_1}X_2^{b_2}\cdots X_r^{b_r}-X_{r+1}^{b_{r+1}}\cdots X_n^{b_n}) \quad \text{ with }\quad \sum_{i=1}^rb_i=\sum_{j=r+1}^nb_j>0.
$$
 Then $A_F$ is a complete intersection if and only if, after changing coordinates, if necessary, we have $r=n-1$ and there exists an integer $1\le i\le n-1$ such that $a_i<qb_i$ being $q=\left \lfloor \frac{a_n+1}{b_n}\right\rfloor$.
 Moreover, in this case we define $m= \min \left\{\left  \lfloor\frac{a_j}{b_j} \right \rfloor \mid 1\leq j\leq n-1\right\}+1$ and we have $$\Ann_R(F)=(x_1^{a_1+b_1+1},x_2^{a_2+b_2+1},\cdots  ,x_{n-1}^{a_{n-1}+b_{n-1}+1}, G)$$
where $$G=x_n^{a_n+1}+\sum _{j=1}^m(x_1^{b_1}\cdots x_{n-1}^{b_{n-1}})^{j}x_n^{a_n+1-jb_n}.$$
\end{theorem}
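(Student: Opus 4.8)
The statement is an ``if and only if'', so I would split the proof into \emph{sufficiency} (the listed conditions force $A_F$ to be the displayed complete intersection) and \emph{necessity} (being a complete intersection forces those conditions). Throughout I would lean on two facts: that $R$ is a polynomial ring, hence Cohen--Macaulay, so any $n$ homogeneous elements whose radical is the irrelevant ideal $\mathfrak m$ automatically form a regular sequence and thus a complete intersection; and Lemma \ref{lem: socle fits}, that a degree-preserving surjection of Artinian Gorenstein algebras with the same socle degree is an isomorphism. I would also use that a codimension-$n$ Artinian Gorenstein algebra is a complete intersection precisely when its defining ideal is minimally generated by exactly $n$ elements.

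For sufficiency I would first observe that the inequality condition is exactly what makes the proposed $G$ a genuine element of $R$: writing $q=\lfloor(a_n+1)/b_n\rfloor$ and $m=\min_j\lfloor a_j/b_j\rfloor+1$, the existence of $i$ with $a_i<qb_i$ is equivalent to $m\le q$, equivalently to $a_n+1-jb_n\ge 0$ for all $1\le j\le m$, i.e. no monomial of $G$ carries a negative exponent. I would then set $J=(x_1^{a_1+b_1+1},\dots,x_{n-1}^{a_{n-1}+b_{n-1}+1},G)$ and prove $J\subseteq\Ann_R(F)$. The pure powers kill $F$ at once, since in the case $r=n-1$ the exponent of $X_i$ in both $m_1$ and $m_2$ is strictly below $a_i+b_i+1$ for $i\le n-1$. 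The element $G$ is the heart of the argument: abbreviating $p=x_1^{b_1}\cdots x_{n-1}^{b_{n-1}}$ and $T_j=p^{\,j}x_n^{a_n+1-jb_n}$, one checks the telescoping identity $T_j\circ m_1=T_{j-1}\circ m_2$ for $1\le j\le m$ (this is where $b_n=\sum_{i<n}b_i$ enters), together with $T_0\circ m_1=0$ and $T_m\circ m_2=0$ (the vanishing at $j=m$ is exactly the role of the defining minimum). Summing gives $G\circ F=-T_m\circ m_2=0$.

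With $J\subseteq\Ann_R(F)$ in hand I would finish sufficiency by checking $J$ is a complete intersection of the right socle degree. Modulo $(x_1,\dots,x_{n-1})$ one has $G\equiv x_n^{a_n+1}$, so $\sqrt J=\mathfrak m$ and $R/J$ is Artinian; cut out by $n$ forms in $n$ variables it is a complete intersection, hence Gorenstein, with socle degree $\sum_{i=1}^{n-1}(a_i+b_i)+a_n=d=\reg(A_F)$. The natural surjection $R/J\twoheadrightarrow R/\Ann_R(F)=A_F$ is then a degree-preserving surjection of Artinian Gorenstein algebras of equal socle degree, so Lemma \ref{lem: socle fits} forces $J=\Ann_R(F)$; this yields simultaneously that $A_F$ is a complete intersection and that its ideal has the stated shape. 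For necessity I would argue that when the conditions fail, $\Ann_R(F)$ needs more than $n$ minimal generators. The starting point is that the monomials of $\Ann_R(F)$ are exactly those of $\Ann_R(m_1)\cap\Ann_R(m_2)$, since $x^c\circ m_1$ and $x^c\circ m_2$ are distinct whenever both are nonzero; this intersection is minimally generated by the pure powers $x_i^{a_i+b_i+1}$ and the cross terms $x_i^{a_i+1}x_j^{a_j+1}$ with $i>r$ and $j\le r$, to which one must adjoin the genuine binomial relations arising from coincidences $x^c\circ m_1=x^{c'}\circ m_2$. I would then show that in the case $r=n-1$ (with the inequality) the single relation $G$ simultaneously pushes all cross terms into $\mathfrak m\,\Ann_R(F)$, keeping the count at $n$, whereas when $2\le r\le n-2$, or when $r=n-1$ but the inequality fails, no such single relation can absorb them and at least $n+1$ minimal generators survive.

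I expect the necessity direction to be the main obstacle, and within it the precise bookkeeping of which cross terms and pure powers remain minimal once the binomial relations are accounted for. A workable route is to compute $\dim_K[\Ann_R(F)]_t$ degree by degree and compare it with the maximal number of degree-$t$ minimal generators a codimension-$n$ complete intersection of socle degree $d$ can carry. The model cases $F=X_1X_2-X_3^2$ (with $h$-vector $(1,3,1)$ and five minimal generators) and $F=X_1X_2-X_3X_4$ (with nine) already show the count leaping well past $n$; the real work is to prove that this excess persists for \emph{every} binomial with $2\le r\le n-2$, and for every binomial with $r=n-1$ violating the inequality, thereby forcing $\min(r,n-r)=1$ (i.e. $r=n-1$ after a permutation of the variables) together with the inequality.
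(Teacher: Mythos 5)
Your sufficiency argument is correct and is essentially the paper's own: you form $J=(x_1^{a_1+b_1+1},\dots,x_{n-1}^{a_{n-1}+b_{n-1}+1},G)$, prove $J\subseteq\Ann_R(F)$, note that $R/J$ is an Artinian complete intersection of socle degree $\deg F$, and apply Lemma \ref{lem: socle fits} to the surjection $R/J\to A_F$. Your telescoping identity $T_j\circ m_1=T_{j-1}\circ m_2$, with $T_0\circ m_1=0$ and $T_m\circ m_2=0$ forced by the definition of $m$, is exactly the ``straightforward computation'' the paper leaves to the reader, and your remark that the existence of $i$ with $a_i<qb_i$ is equivalent to $m\le q$ (so $G$ has no negative exponents) is the point the paper records as $a_n+1-mb_n\ge 0$. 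This half needs no changes.

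The necessity direction, however, contains a genuine gap: what you give is a program, and you say so yourself (``the real work is to prove that this excess persists\dots''). For $2\le r\le n-2$ your outline does close and coincides with the paper's Case 1: the $r(n-r)\ge n$ cross terms $x_j^{a_j+1}x_i^{a_i+1}$ ($j\le r<i$) are part of a minimal generating set, and since the ideal they generate is not Artinian while $\Ann_R(F)$ is, one gets $\mu(\Ann_R(F))>r(n-r)\ge n$; you also correctly fold $r=1$ into $r=n-1$ by symmetry. The unproved case is $r=n-1$ with $a_i\ge qb_i$ for \emph{all} $i\le n-1$, and your proposed tool there --- comparing $\dim_K[\Ann_R(F)]_t$ degree by degree with what a codimension-$n$ complete intersection can carry --- is not only unexecuted but not guaranteed to work: a non-complete-intersection Gorenstein algebra may share its Hilbert function with a complete intersection, so a dimension count alone need not detect the failure. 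The paper's argument is sharper and, pleasantly, reuses your own telescoping mechanism: assuming $A_F$ is a complete intersection, one first shows (the paper's Claim) that $x_1^{a_1+b_1+1},\dots,x_{n-1}^{a_{n-1}+b_{n-1}+1}$ are among the minimal generators; the complete-intersection socle-degree formula then forces the remaining generator $P$ to have degree exactly $a_n+1$, and Artinianness forces the monomial $x_n^{a_n+1}$ to appear in $P$. Since $x_n^{a_n+1}\circ F=-X_1^{a_1}\cdots X_{n-1}^{a_{n-1}}X_n^{b_n-1}$, cancelling this term successively forces $P$ to contain every monomial $x_1^{sb_1}\cdots x_{n-1}^{sb_{n-1}}x_n^{a_n+1-sb_n}$ for $1\le s\le q$; the hypothesis $a_i\ge qb_i$ for all $i$ keeps each intermediate term (your $T_s\circ m_2$) nonzero, while the step $s=q+1$ would require a negative exponent of $x_n$, so the final term can never cancel and $P\circ F\ne 0$, a contradiction. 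Without this (or an equivalent) argument, your proof of the ``only if'' direction is incomplete.
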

\begin{proof} 

We assume that $$
F=X_1^{a_1}X_2^{a_2}\cdots X_n^{a_n}(X_1^{b_1}X_2^{b_2}\cdots X_{n-1}^{b_{n-1}}-X_n^{b_n}) \quad \text{ with }\quad \sum_{i=1}^{n-1}b_i=b_n>0.
$$ 
 with $0\le a_i<qb_i$ for some $1\le i \le n-1$ being $q=\left \lfloor \frac{a_n+1}{b_n}\right\rfloor$ and we will prove that $A_F$ is a complete intersection.
  We consider the ideal
 $$J=(x_1^{a_1+b_1+1},x_2^{a_2+b_2+1},\cdots  ,x_{n-1}^{a_{n-1}+b_{n-1}+1}, G)$$
where $$G=x_n^{a_n+1}+\sum _{j=1}^m(x_1^{b_1}\cdots x_{n-1}^{b_{n-1}})^{j}x_n^{a_n+1-jb_n}.$$ Notice that $a_n+1-mb_n\geq 0$ because $m\leq q$.
It holds:
\begin{enumerate}
    \item $J$ is a complete intersection Artinian ideal. In fact, it is generated by a regular sequence.
    \item $\reg(R/J)=\reg(A_F)=\deg (F)$. In fact, $\reg(R/J)=\sum_{j=1}^{n}a_j+b_n=\reg(A_F)=\deg (F)$.
    \item $J\subset \Ann _R(F)$. In fact, $x_j^{a_j+b_j+1}\circ F=0$ for $1\le j \le n-1$ and  a straightforward computation gives $G\circ F=0$.
\end{enumerate}
Therefore, there is a surjective degree-preserving homomorphism $$\pi:R/J\to A_F=R/\Ann_R (F)$$ and applying Lemma \ref{lem: socle fits}
we conclude that $A_F$ is a complete intersection.

\vskip 2mm

Let us prove the converse. We distinguish two cases:

\vskip 2mm
\noindent \underline{Case 1:} We assume $r<n-1$. Let us write $$F=X_1^{a_1}X_2^{a_2}\cdots X_n^{a_n}(X_1^{b_1}X_2^{b_2}\cdots X_r^{b_r}-X_{r+1}^{b_{r+1}}\cdots X_n^{b_n}) $$ with $\sum_{i=1}^{r}b_i=\sum _{j=r+1}^nb_j>0$ and $r<n-1$. We will demonstrate that $\Ann_R(F)$ cannot be a complete intersection. We first observe that $S=\{x_i^{a_i +1} x_j^{a_j +1}| 1\leq i\leq r,\  r+1\leq j\leq n\}$ is part of a minimal system of generators of $\Ann_R(F)$ and, moreover, $J=(S) \subsetneq \Ann_R(F)$ because $\Ann_R(F)$ is an Artinian ideal while $J=(S)$ is not Artinian. As usual we denote by $\mu(I)$ the minimal number of generators of an ideal $I$. Since $$\mu(\Ann_R(F))>|S|\ge n$$ we immediately conclude that $A_F$ is not a complete intersection $K$-algebra.

\vskip 2mm
\noindent \underline{Case 2:} We assume $r=n-1$ and  $a_i\ge qb_i$ for all $1\le i\le n-1$. 
We will again show that $\Ann_R(F)$ is not a complete intersection. 
Let us assume that  $\Ann_R(F)$ is  a complete intersection and we will get a contradiction. We write  $$F=X_1^{a_1}\cdots X_n^{a_n}({X_1}^{b_1}\cdots {X_{n-1}}^{b_{n-1}}-{X_n}^{b_n})$$
 where without lose of generality we  assume  that $0<a_1+b_1\leq a_{2}+b_2\leq\cdots \leq a_{n-1}+b_{n-1}$. 
 
 \vskip 2mm
 \noindent {\bf Claim:} The monomials 
 $ x_1^{a_1+b_1+1},x_2^{a_2+b_2+1},\cdots , x_{n-1}^{a_{n-1}+b_{n-1}+1}$ 
are part of a minimal system of generators of $\Ann_R(F)$.
 
 \vskip 2mm
 \noindent {\em Proof of the Claim.}  Clearly $x_1^{a_1+b_1+1}\in \Ann_R(F)$. Hence, if $x_1^{a_1+b_1+1}$ is not minimal, there will exist a homogeneous form $f_a\in \Ann_R(F)$ of degree $a\le a_1+b_1$ of the following type 
 $$f_a=x_1^a+\sum_{(i_1, \cdots ,i_n)\ne (a,0,\cdots ,0)\atop i_1+\cdots +i_n=a} \alpha _{i_1,...,i_n}x_1^{i_1}\cdots x_n^{i_n}.$$
Since $f_a\in \Ann_R(F)$, we have 
\begin{equation}\label{computation}
\begin{array}{rcl} 0= f_a\circ F & = & f_a\circ X_1^{a_1}\cdots X_n^{a_n}({X_1}^{b_1}\cdots {X_{n-1}}^{b_{n-1}}-{X_n}^{b_n})\\ \\
& = & x_1^a \circ X_1^{a_1}\cdots X_n^{a_n}({X_1}^{b_1}\cdots {X_{n-1}}^{b_{n-1}}-{X_n}^{b_n}) + \\
& & ( \sum  \alpha _{i_1,...,i_n}x_1^{i_1}\cdots x_n^{i_n}.)\circ X_1^{a_1}\cdots X_n^{a_n}({X_1}^{b_1}\cdots {X_{n-1}}^{b_{n-1}}-{X_n}^{b_n}) \\\\
 & = & X_1^{a_1+b_1-a}X_2^{a_2+b_2}\cdots X_{n-1}^{a_{n-1}+b_{n-1}}X_n^{a_n}-{X_1}^{a_1-a}X_2^{a_2}\cdots {X_{n-1}}^{a_{n-1}}X_n^{a_n+b_n} + \\
 & & \sum_{i_1,\cdots ,i_n}  \alpha _{i_1,...,i_n} X_1^{a_1+b_1-i_1}X_2^{a_2+b_2-i_2}\cdots X_{n-1}^{a_{n-1}+b_{n-1}-i_{n-1}}X_n^{a_n-i_n} -\\
 & & \sum_{i_1,\cdots ,i_n}  \alpha _{i_1,...,i_n} {X_1}^{a_1-i_1}X_2^{a_2-i_2}\cdots X_{n-1}^{a_{n-1}-i_{n-1}}X_n^{a_n+b_n-i_n}
\end{array}
\end{equation}
where we assume $X_i^{h_i}=0$ if $h_i<0$, for some $1\leq i\leq n$. An intricate computation shows that there is no an n-ple $(a,0,\cdots ,0)\ne (i_1,\cdots ,i_n)\in \ZZ^n_{\ge 0}$ with $\sum _{j=1}^ni_j=a$ such that $$ X_1^{a_1+b_1-a}X_2^{a_2+b_2}\cdots X_{n-1}^{a_{n-1}+b_{n-1}}X_n^{a_n}$$ coincide with  either $$ X_1^{a_1+b_1-i_1}X_2^{a_2+b_2-i_2}\cdots X_{n-1}^{a_{n-1}+b_{n-1}-i_{n-1}}X_n^{a_n-i_n} \quad \text{or} \quad  {X_1}^{a_1-i_1}X_2^{a_2-i_2}\cdots X_{n-1}^{a_{n-1}-i_{n-1}}X_n^{a_n+b_n-i_n}.$$ Therefore, the  first monomial in (\ref{computation}) cannot be delayed, i.e.
$f_a\circ F\ne 0$ and, hence, we conclude that $x_1^{a_1+b_1+1}$ is part of a minimal system of generators. Similar argument can be applied for any variable $X_i$ with $1\le i \le n-1$ and this finishes the proof of the Claim.

\vskip 2mm
Since we are assuming that  $\Ann_R(F)$ is an Artinian  complete intersection ideal  we, necessarily, have:
$$\Ann_R(F)=(x_1^{a_1+b_1+1}, \cdots, x_{n-1}^{a_{n-1}+b_{n-1}+1},P)$$ for some homogeneous form $P\in S$. Let us compute the degree of $P$, We have:
 $$\reg (A_F)=\sum _{i=1}^{n-1}(a_1+b_1+1)+\deg P-n= \deg F.$$
 Therefore, $\deg (P)=a_n+1.$ Being $\Ann_R(F)$ is an Artinian ideal we must have 
  $$P=x_n^{a_n+1}+\sum _{(i_1, \cdots ,i_n)\ne (0,0,\cdots ,a_n+1)\atop i_1+\cdots +i_n=a_n+1} \alpha _{i_1,...,i_n}x_1^{i_1}\cdots x_n^{i_n}.$$

We will check that such $P\notin \Ann_R(F)$, namely, $P\circ F\ne 0$. Indeed, we have:
\begin{equation}\label{computation2}
\begin{array}{rcl} P\circ F & = & P\circ X_1^{a_1}\cdots X_n^{a_n}({X_1}^{b_1}\cdots {X_{n-1}}^{b_{n-1}}-{X_n}^{b_n})\\ \\
& = & x_n^{a_n+1} \circ X_1^{a_1}\cdots X_n^{a_n}({X_1}^{b_1}\cdots {X_{n-1}}^{b_{n-1}}-{X_n}^{b_n}) + \\
& & ( \sum  \alpha _{i_1,...,i_n}x_1^{i_1}\cdots x_n^{i_n}.)\circ X_1^{a_1}\cdots X_n^{a_n}({X_1}^{b_1}\cdots {X_{n-1}}^{b_{n-1}}-{X_n}^{b_n}) \\\\
 & = & -{X_1}^{a_1}X_2^{a_2}\cdots {X_{n-1}}^{a_{n-1}}X_n^{b_n-1} + \\
 & & \sum_{i_1,\cdots ,i_n}  \alpha _{i_1,...,i_n} X_1^{a_1+b_1-i_1}X_2^{a_2+b_2-i_2}\cdots X_{n-1}^{a_{n-1}+b_{n-1}-i_{n-1}}X_n^{a_n-i_n} -\\
 & & \sum_{i_1,\cdots ,i_n}  \alpha _{i_1,...,i_n} {X_1}^{a_1-i_1}X_2^{a_2-i_2}\cdots X_{n-1}^{a_{n-1}-i_{n-1}}X_n^{a_n+b_n-i_n}.
\end{array}
\end{equation}

We analyze separately the case $q=0$ from the case $q>0$. If $q=0$ or, equivalently, $a_n+1<b_n$ we get from the equality (\ref{computation2}) that the summand $-{X_1}^{a_1}X_2^{a_2}\cdots {X_{n-1}}^{a_{n-1}}X_n^{b_n-1}$ only cancels when $(i_1,\cdots ,i_{n-1},i_n)=(b_1,\cdots ,b_{n-1},a_n+1-b_n)$ which is not allowed because $a_n+1-b_n<0$. Therefore, if $q=0$ we have $P\circ F\ne 0$  and as we wanted to prove $P\notin \Ann_R(F)$.

From now on, we assume that $q>0$. Let us call
$$A_{i_1,\cdots ,i_n}:=  X_1^{a_1+b_1-i_1}X_2^{a_2+b_2-i_2}\cdots X_{n-1}^{a_{n-1}+b_{n-1}-i_{n-1}}X_n^{a_n-i_n} $$
and
$$B_{i_1,\cdots ,i_n}:={X_1}^{a_1-i_1}X_2^{a_2-i_2}\cdots X_{n-1}^{a_{n-1}-i_{n-1}}X_n^{a_n+b_n-i_n}.$$
We observe that 
$$A_{i_1,\cdots ,i_n}\neq A_{j_1,\cdots ,j_n} \ \ \ \text{ and } \ \ \  B_{i_1,\cdots ,i_n}\neq B_{j_1,\cdots ,j_n}, \ \ \ \text{if } \ \ (i_1,\cdots ,i_n)\neq (j_1,\cdots ,j_n).$$
Moreover, for $\sigma_s:=(sb_1,\ldots, sb_{n-1}, a_n-sb_n+1)$ with $s\geq 1$ we have
$$\begin{array}{rcl}A_{\sigma_1}&=&{X_1}^{a_1}X_2^{a_2}\cdots {X_{n-1}}^{a_{n-1}}X_n^{b_n-1},\\
A_{\sigma_{s+1}}&=&B_{\sigma_s} \text{ for } 1\le s \le q-1,\\
A_{i_1,\cdots ,i_n} & \neq &  B_{j_1,\cdots ,j_n}  \text{ if } (i_1,\cdots ,i_n)\ne \sigma _s \text{ for any }s
.\end{array}$$
We  write $a_n+1=qb_n+r$ with $0\leq r<b_n$. We have and $\sigma_{q}=(qb_1, \ldots, qb_{n-1}, r)$ and $\sigma_{q+1}=((q+1)b_1, \ldots, (q+1)b_{n-1}, r-b_n)$. Since $b_n-r<0$,  the summand $B_{\sigma_q}$  in the equality \eqref{computation2} will never be canceled. Therefore,   $P\circ F\ne 0$ and $P\notin \Ann_R(F)$.
\end{proof}

\begin{remark}
    The case $n=3$ of the above theorem was already proved in \cite[Corollary 4.3]{ADF+24} with an alternative proof that strongly uses the structure theorem  of the Gorenstein ideals of codimension 3 given by Buchsbaum and Eisenbud in \cite{BE}. It uses the fact that any codimension 3 Gorenstein ideal is generated by the pfaffians of a skew symmetric matrix.
\end{remark}

We will now illustrate  the above result with  examples.
\begin{example} Set $R=K[x,y,z,t,u,v]$ and $S=K[X,Y,Z,T,U,V]$.  $S$  is a divided power
algebra  regarded as a $R$-module with the contraction action.

  \vskip 2mm
    (1)  We take $$F=X^3YZ^4TU^2V^6(X^2YZT^2U-V^7)\in S.$$ Using Macaulay2 \cite{M2} we compute the annhilator of $F$ and we get
$$\Ann_R(F)=(y^3,u^4,t^4,z^6,x^6,v^7+x^2yzt^2u).$$
Therefore, $A_F$ is an Artinian complete intersection $K$-algebra as it is predict in Theorem \ref{thm1}.
    \vskip 2mm

    (2) We take $$F=XYZTUV^2(XYZ-V^3)\in S.$$ Using Macaulay2 \cite{M2} we compute the annhilator of $F$ and we get
$$\Ann_R(F)=(y^3,u^2,t^2,z^3,x^3,v^6, zv^3+xyz^2,yv^3+xy^2z,xv^3+x^2yz).$$
Therefore, $A_F$ is not an Artinian complete intersection $K$-algebra as it is predict in Theorem \ref{thm1}.
     \vskip 2mm

    (3) Finally, we take $$F=XYZTUV(XYZ-TV^2)\in S.$$ Using again  Macaulay2 \cite{M2} we compute the annhilator of $F$ and we get
$$\Ann_R(F)=(y^3,u^2,t^3,z^3,x^3,v^4,z^2v^2,y^2v^2,x^2v^2,z^2t^2,y^2t^2,x^2t^2,$$ $$xyz^2+ztv^2,xy^2z+ytv^2,x^2yz+xtv^2).$$
Therefore, $A_F$ is an Artinian non complete intersection $K$-algebra as it is predict in Theorem \ref{thm1}.
\end{example}

\section{Weak and Strong Lefschetz property for complete intersections}

This section is entirely devoted to prove that any artinian standard complete intersection graded $K$-algebra $A_F$ with dual Macaulay generator a binomial $F$ has the strong Lefschetz property.
\begin{definition} \label{WLP+SLP}
Let $A=R/I$ be a graded Artinian $K$-algebra. We say that $A$ has the {\em weak Lefschetz property} (WLP, for short)
if there is a linear form $\ell \in [A]_1$ such that, for all
integers $i\ge0$, the multiplication map
\[
\times \ell: [A]_{i}  \longrightarrow  [A]_{i+1}
\]
has maximal rank.
 In this case, the linear form $\ell$ is called a weak Lefschetz
element of $A$. If for the general form $\ell \in [A]_1$ and for an integer $j$ the
map $\times \ell:[A]_{j-1}  \longrightarrow  [A]_{j}$ does not have maximal rank, we will say that the ideal $I$ fails the WLP in
degree $j$.

$A$ has the {\em strong Lefschetz property} (SLP, for short) if there is a linear form $\ell \in [A]_1$ such that, for all
integers $i\ge0$ and $k\ge 1$, the multiplication map
\[
\times \ell^k: [A]_{i}  \longrightarrow  [A]_{i+k}
\]
has maximal rank.  Such an element $\ell$ is called a strong Lefschetz element for $A$.
\end{definition}

These properties play a crucial role in the study of
Artinian $K$-algebras, influencing their algebraic structure and leading to geometric and
combinatorial applications. To determine whether an Artinian standard graded $K$-algebra $A$ has the WLP or SLP seems a simple problem of linear algebra, but it has proven to be extremely elusive and  much more work on this topic remains to be done, see \cite{JMR} and its reference list for more information. 
In particular, the following longstanding problem remains open:

\begin{Problem}
    Does {\em any} complete intersection Artinian graded $K$-algebra satisfy WLP/SLP?
\end{Problem}

In codimension 2 any graded Artinian $K$-algebra has the SLP property \cite{Br} and in codimension 3 any complete intersection graded algebra has WLP (see \cite{HMNW}). 
As we have just pointed out a major unresolved issue is to determine whether all complete intersection rings of characteristic zero satisfy the WLP or
SLP.
Via Macaulay-Matlis duality, any  Artinian Gorenstein graded $K$-algebra $A_F$ with  Macaulay dual generator a monomial is a quotient of the polynomial ring by a monomial complete intersection and Stanley \cite{S} and Watanabe \cite{W} have shown that $A_F$ the SLP.
While  very recently codimension-three complete intersection  Artinian $K$-algebras with
binomial Macaulay dual generators are known to satisfy both WLP
and SLP \cite[Theorem ?.?]{ADF+24}, determining broader conditions that guarantee these properties in higher
codimensions is open and we solve it in  our next theorem.

We start recalling a couple of crucial results. First, the fact that 
the SLP is preserved by tensor product. To be precise we have:

\begin{proposition}\label{tensor}
    If $A$ and $B$ are graded Artinian $K$-algebras with the SLP then the
 tensor product $A\otimes _K B$ also has the SLP.
\end{proposition}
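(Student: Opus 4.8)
The plan is to show that if $\ell_A\in[A]_1$ and $\ell_B\in[B]_1$ are strong Lefschetz elements for $A$ and $B$ respectively, then $\ell:=\ell_A\otimes 1+1\otimes\ell_B\in[A\otimes_K B]_1$ is a strong Lefschetz element for $A\otimes_K B$. The key observation is that, under the multiplication of the tensor product algebra, the operator $\times\ell$ splits as $\times\ell=(\times\ell_A)\otimes 1+1\otimes(\times\ell_B)$, since $\ell\cdot(a\otimes b)=(\ell_A a)\otimes b+a\otimes(\ell_B b)$. Thus $\times\ell$ is the sum of two commuting nilpotent operators coming from the two factors, and the whole problem reduces to controlling the Jordan structure of such a sum.

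The approach I would take is the classical $\mathfrak{sl}_2$-theoretic reformulation of the SLP. First I would recall the dictionary: a graded Artinian $K$-algebra $C$ with $\reg(C)=d$ has the SLP with element $\ell_C$ if and only if $C$ carries the structure of a finite-dimensional $\mathfrak{sl}_2$-representation in which the raising operator $e$ acts as $\times\ell_C$ and the semisimple element $h$ acts on $[C]_i$ as the scalar $2i-d$. The nontrivial content is the existence, given the SLP, of a degree $-1$ lowering operator $f$ with $[e,f]=h$; once this $\mathfrak{sl}_2$-triple is in place, complete reducibility of $\mathfrak{sl}_2$-modules forces every power $\times\ell_C^{\,k}$ to have maximal rank, and conversely.

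Granting this dictionary for $A$ and for $B$, I would form the tensor product representation on $A\otimes_K B$ with $e=e_A\otimes 1+1\otimes e_B$, $h=h_A\otimes 1+1\otimes h_B$ and $f=f_A\otimes 1+1\otimes f_B$; these satisfy the $\mathfrak{sl}_2$-relations because the tensor product of two representations of a Lie algebra is again a representation. Two verifications then close the argument. First, $e$ coincides with $\times\ell$ by the splitting noted above. Second, on $[A\otimes_K B]_m=\bigoplus_{i+j=m}[A]_i\otimes[B]_j$ the element $h$ acts by $(2i-d_A)+(2j-d_B)=2m-(d_A+d_B)$, and since $\reg(A\otimes_K B)=\reg(A)+\reg(B)=d_A+d_B$, this is exactly the scalar $2m-\reg(A\otimes_K B)$ demanded by the dictionary. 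Applying the converse direction to this $\mathfrak{sl}_2$-structure yields that $\ell$ is a strong Lefschetz element, so $A\otimes_K B$ has the SLP.

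I expect the main obstacle to be the Lefschetz--$\mathfrak{sl}_2$ dictionary itself, specifically producing the lowering operator $f$ from the bare hypothesis that all the maps $\times\ell_C^{\,k}$ have maximal rank; this is the only genuinely representation-theoretic input and is where the symmetry of the weight grading enters. By contrast, checking that the tensor construction is an $\mathfrak{sl}_2$-triple and that its raising operator and weight grading match $\ell$ and $\reg(A\otimes_K B)$ is routine. As an alternative that sidesteps the explicit lowering operator, one can argue directly with Jordan types: the SLP says $\times\ell_C$ has the maximal Jordan type compatible with the Hilbert function, and the Clebsch--Gordan decomposition computes the Jordan type of $(\times\ell_A)\otimes 1+1\otimes(\times\ell_B)$ as the corresponding combination of the factor types, from which maximal rank in each degree can be read off.
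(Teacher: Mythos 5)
The paper does not actually prove this proposition --- its ``proof'' is a citation to \cite[Proposition 11]{HW} --- so the real comparison is with that reference, and your $\mathfrak{sl}_2$/Clebsch--Gordan ideas are indeed in the spirit of the cited source. However, your main argument has a genuine gap. The dictionary you rely on --- ``$C$ has the SLP with element $\ell_C$ if and only if $C$ carries an $\mathfrak{sl}_2$-structure in which $e=\times\ell_C$ and $h$ acts on $[C]_i$ by the scalar $2i-d$'' --- is false for general graded Artinian $K$-algebras, and the proposition (like the paper's Definition \ref{WLP+SLP}) imposes no symmetry hypothesis. The weights of a finite-dimensional $\mathfrak{sl}_2$-representation form a multiset invariant under negation, so an $\mathfrak{sl}_2$-structure of the required shape forces $h_i=h_{d-i}$, i.e.\ a symmetric Hilbert function. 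A concrete counterexample: $C=K[x,y]/(x^2,xy,y^2)$ has Hilbert function $(1,2)$ and has the SLP (take $\ell=x$), but the would-be weight multiset $\{-1,1,1\}$ is not symmetric, so no lowering operator $f$ with $[e,f]=h$ can exist. Thus the step you yourself flag as ``the only genuinely representation-theoretic input'' is precisely the step that fails in general; your argument as written proves the proposition only under the additional assumption that $A$ and $B$ have symmetric Hilbert functions (e.g.\ Gorenstein algebras). That case happens to cover the paper's sole application of the proposition (tensor products of complete intersections), but it is not the statement as given.

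Your alternative sketch via Jordan types and Clebsch--Gordan is the correct route to the general statement, and is essentially what the cited reference does, but as stated it too skips the key point. The Jordan type of $\times\ell$ on $A\otimes_K B$ (which Clebsch--Gordan does compute, in characteristic $0$, from the Jordan types of $\times\ell_A$ and $\times\ell_B$) does not by itself determine the ranks of the graded maps $\times\ell^k\colon [A\otimes_K B]_i\to[A\otimes_K B]_{i+k}$: one must also track how the Jordan strings sit against the grading. Concretely, each string of $\times\ell_A$ occupies a degree interval, Clebsch--Gordan tells you the tensor product of a string in degrees $[s,t]$ with one in degrees $[s',t']$ decomposes into strings in degrees $[s+s'+k,\,t+t'-k]$, and you must then verify the combinatorial claim that this collection of intervals computes $\min(h_i,h_{i+k})$ in every bidegree --- equivalently, that the resulting Jordan type is the maximal one compatible with the convolved Hilbert function \emph{and} correctly positioned. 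That verification is the actual content of the proof and is missing from the proposal.
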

\begin{proof}
    See \cite[Proposition 11]{HW}.
\end{proof}

\begin{proposition}\label{keySLP}
 Let $A$ be a graded Artinian  $K$-algebra with the
SLP and let $g\in A[z]$ be a homogeneous monic polynomial. Then $B=A[z]/(g)$ has
 the strong Lefschetz property.
\end{proposition}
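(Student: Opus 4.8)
The plan is to deform $B=A[z]/(g)$ to the tensor product $A\otimes_K K[z]/(z^s)$ inside a flat family of graded Artinian algebras, and then to transport the SLP back. Write $g=z^s+c_{s-1}z^{s-1}+\cdots+c_0$ with $c_i\in A_{s-i}$, where $s=\deg g$ and $\deg z=1$. Because $g$ is monic in $z$, the algebra $B$ is a free $A$-module with basis $1,z,\dots,z^{s-1}$; in particular it is Artinian, and its Hilbert function coincides with that of $A\otimes_K K[z]/(z^s)$. First I would introduce, for each $t\in\mathbb{A}^{1}$, the homogeneous monic polynomial $g_t=z^s+\sum_{i=0}^{s-1}t^{\,s-i}c_iz^i$ and set $B_t=A[z]/(g_t)$. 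Every $g_t$ is monic of degree $s$, so each $B_t$ is free over $A$ on $1,z,\dots,z^{s-1}$ and $\{B_t\}_{t\in\mathbb{A}^{1}}$ is a flat family of constant Hilbert function. At the endpoints $B_1=B$ and $B_0=A[z]/(z^s)=A\otimes_K K[z]/(z^s)$, the latter having the SLP by Proposition \ref{tensor}, since $K[z]/(z^s)$ has the SLP with strong Lefschetz element $z$.

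The \emph{key point} is that all fibres with $t\neq0$ are isomorphic to $B$. For $\lambda\in K^{*}$ the graded $A$-algebra automorphism $\phi_\lambda\colon A[z]\to A[z]$ fixing $A$ and sending $z\mapsto\lambda z$ satisfies $\phi_\lambda(g_t)=\lambda^{s}g_{t/\lambda}$, so it induces an isomorphism $B_t\cong B_{t/\lambda}$. Taking $t=1$ shows $B\cong B_\mu$ for every $\mu\neq0$.

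It remains to exhibit one nonzero parameter at which the SLP holds. I would fix a strong Lefschetz element $\ell_A\in A_1$ of $A$ and consider $L=\ell_A+z\in(B_t)_1$. In the common free basis, each multiplication map $\times L^k\colon(B_t)_i\to(B_t)_{i+k}$ is given by a matrix whose entries are polynomials in $t$, and having the maximal rank forced by the constant Hilbert function amounts to the nonvanishing of a suitable maximal minor. Hence $U=\{t:\ L\text{ is a strong Lefschetz element of }B_t\}$ is a Zariski-open subset of $\mathbb{A}^{1}$, being a finite intersection of such loci. Moreover $0\in U$, because $L=\ell_A+z$ is exactly the strong Lefschetz element of $A\otimes_K K[z]/(z^s)$ produced in the proof of Proposition \ref{tensor}. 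A nonempty open subset of $\mathbb{A}^{1}$ is cofinite, so $U$ contains some $t_0\neq0$; thus $B_{t_0}$ has the SLP, and since $B_{t_0}\cong B$, so does $B$.

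The main obstacle is that $g$ is an arbitrary monic polynomial rather than the pure power $z^s$, so Proposition \ref{tensor} does not apply to $B$ directly; the family $g_t$ is designed to overcome this. What makes it work is the simultaneous occurrence of two facts: the special fibre $B_0$ is a tensor product we can control, while every general fibre is genuinely \emph{isomorphic} to $B$ through $\phi_\lambda$, not merely a flat deformation of it. Should one wish to bypass the explicit element $L$, the same conclusion follows from the general principle that the SLP is an open condition in a flat family of constant Hilbert function (by upper semicontinuity of the dimension of the non-Lefschetz locus in the projectivized bundle of linear forms), again combined with $B\cong B_{t_0}$.
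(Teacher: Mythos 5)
Your proof is correct, but it follows a genuinely different route from the paper: the paper does not prove this proposition at all, it simply cites \cite[Corollary 29(1)]{HW}, whose proof goes through Harima--Watanabe's theory of finite free extensions and central simple modules. You instead give an essentially self-contained deformation argument: the family $g_t=z^s+\sum_i t^{s-i}c_iz^i$ interpolates between $B$ and the tensor product $A\otimes_K K[z]/(z^s)$, all fibres are free over $A$ on $1,z,\dots,z^{s-1}$ (so the Hilbert function is constant and the graded pieces are canonically identified), the rescaling automorphism $z\mapsto\lambda z$ shows every fibre with $t\neq 0$ is graded-isomorphic to $B$ itself, and maximal rank of $\times L^k$ in the common basis is an open condition in $t$, nonempty because it holds at $t=0$ by Proposition \ref{tensor}. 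The $\mathbb{G}_m$-rescaling step is the key point that makes this work: without it, openness would only give the SLP for a \emph{generic} deformation of $B$, not for $B$. What your approach buys is transparency and independence from the Harima--Watanabe machinery, at the cost of one mild imprecision: to get $0\in U$ you need not just the \emph{statement} of Proposition \ref{tensor} but the fact, contained in the proof of \cite[Proposition 11]{HW} (and standard in characteristic zero via the Clebsch--Gordan decomposition), that $\ell_A\otimes 1+1\otimes z$ is itself a strong Lefschetz element of $A\otimes_K K[z]/(z^s)$; your fallback argument via the incidence locus $\{(t,L): L \text{ is not strong Lefschetz for } B_t\}$ and semicontinuity cleanly removes even this dependence, since a nonempty open subset of $\mathbb{A}^1\times (B_0)_1$ projects onto a cofinite subset of $\mathbb{A}^1$.
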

\begin{proof}
    See \cite[Corollary 29(1)]{HW}.
\end{proof}

\begin{theorem} \label{thm2}
Let $A_F$ be an Artinian complete intersection $K$-algebra of  codimension $n$. Assume that the Macaulay dual generator is a binomial. Then, $A_F$ satisfies SLP.
\end{theorem}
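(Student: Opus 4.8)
The plan is to feed the explicit description of $\Ann_R(F)$ produced by Theorem \ref{thm1} into the two preservation results, Proposition \ref{tensor} and Proposition \ref{keySLP}, together with the Stanley--Watanabe theorem that monomial complete intersections satisfy SLP. First I would clear away the degenerate and reducible cases. Since SLP is an isomorphism invariant and, by \cite[Lemma 3.1]{ADF+24}, $A_F$ does not change under the assumption that $F=m_1-m_2$ is an honest difference of monomials, I may work with such an $F$. Using the Remark preceding Theorem \ref{thm1} I would peel off every variable $x_i$ that occurs only in $g=\gcd(m_1,m_2)$ (i.e.\ with $b_i=0$), which realizes
\[
A_F\;\cong\;\Big(\bigotimes_i K[x_i]/(x_i^{a_i+1})\Big)\otimes_K A_{F'},
\]
where $F'$ is a binomial all of whose binomial exponents $b_i$ are nonzero. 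Each one-variable factor $K[x_i]/(x_i^{a_i+1})$ is a monomial complete intersection and hence has SLP, so by Proposition \ref{tensor} it suffices to prove that the ``core'' algebra $A_{F'}$ has SLP. Renaming, I may therefore assume all $b_i\neq 0$. Its codimension $n'$ satisfies $n'\ge 2$; if $n'=2$ the algebra has SLP by \cite{Br}, so I may assume $n'\ge 3$.

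Now for the main step. Since $A_F$ is a complete intersection with all $b_i\neq0$, Theorem \ref{thm1} applies and, after the change of coordinates it provides, gives $r=n-1$ together with the presentation
\[
\Ann_R(F)=(x_1^{a_1+b_1+1},\dots,x_{n-1}^{a_{n-1}+b_{n-1}+1},G),\qquad G=x_n^{a_n+1}+\sum_{j=1}^m\big(x_1^{b_1}\cdots x_{n-1}^{b_{n-1}}\big)^{j}x_n^{a_n+1-jb_n}.
\]
Because SLP is invariant under isomorphism I am free to work with this normal form. Set
\[
A=K[x_1,\dots,x_{n-1}]/(x_1^{a_1+b_1+1},\dots,x_{n-1}^{a_{n-1}+b_{n-1}+1}),
\]
a monomial complete intersection in $n-1$ variables, which has SLP by Stanley \cite{S} and Watanabe \cite{W}.

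The crux is to recognize the structural shape $A_F\cong A[x_n]/(G)$. Reading $G$ as a polynomial in $x_n$ with coefficients in $A$, its leading term is $x_n^{a_n+1}$ with coefficient $1$, while the remaining coefficients are $(x_1^{b_1}\cdots x_{n-1}^{b_{n-1}})^{j}\in A$; thus $G$ is monic of degree $a_n+1$ in $x_n$. It is moreover homogeneous of degree $a_n+1$, since the relation $\sum_{i=1}^{n-1}b_i=b_n$ forces each summand $(x_1^{b_1}\cdots x_{n-1}^{b_{n-1}})^{j}x_n^{a_n+1-jb_n}$ to have degree $jb_n+(a_n+1-jb_n)=a_n+1$. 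Hence $G$ is precisely a homogeneous monic polynomial over the SLP-algebra $A$, and Proposition \ref{keySLP} gives that $B=A[x_n]/(G)\cong A_F$ has SLP, which completes the argument.

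I expect that the only genuine content lies in the identification of the last paragraph: once Theorem \ref{thm1} is available, the entire proof amounts to reading its generating set for $\Ann_R(F)$ as exhibiting $A_F$ as a monic (hence free) extension of a monomial complete intersection, so that Proposition \ref{keySLP} applies verbatim. The main obstacle is therefore not any delicate estimate but the organizational reduction (peeling monomial factors and invoking Proposition \ref{tensor} and the codimension-two result \cite{Br}) together with the simultaneous verification that $G$ is monic and homogeneous in $x_n$; both are routine given the characterization theorem.
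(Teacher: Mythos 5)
Your proposal is correct and follows essentially the same route as the paper: invoke Theorem \ref{thm1} to get the normal form of $\Ann_R(F)$, observe that $A=K[x_1,\dots,x_{n-1}]/(x_1^{a_1+b_1+1},\dots,x_{n-1}^{a_{n-1}+b_{n-1}+1})$ has SLP by Stanley--Watanabe, check that $G$ is homogeneous and monic in $x_n$, and conclude via Proposition \ref{keySLP} that $A_F\cong A[x_n]/(G)$ has SLP. Your preliminary reductions (peeling off the $b_i=0$ variables with Proposition \ref{tensor} and handling codimension two via \cite{Br}) are a welcome extra bit of care that the paper's proof leaves implicit, but they do not change the substance of the argument.
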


\begin{proof} By Theorem \ref{thm1} we know that after a change of coordinates we have 
$$
F=X_1^{a_1}X_2^{a_2}\cdots X_n^{a_n}(X_1^{b_1}X_2^{b_2}\cdots X_{n-1}^{b_{n-1}}-X_n^{b_n}) \quad with \quad \sum_{i=1}^{n-1}b_i=b_n>0.
$$ 
 with $a_i<qb_i$ for some $1\le i \le n-1$ and $q=\left \lfloor \frac{a_n+1}{b_n}\right\rfloor$. We also have
 $$\Ann_R(F)=(x_1^{a_1+b_1+1},x_2^{a_2+b_2+1},\cdots  ,x_{n-1}^{a_{n-1}+b_{n-1}+1}, G)$$
where $$G=x_n^{a_n+1}-\sum _{j=1}^m(x_1^{b_1}\cdots x_{n-1}^{b_{n-1}})^{j}x_n^{a_n+1-jb_n}$$
and $m= \min \left\{ \frac{a_j}{b_j}|1\leq j\leq n-1\right\}+1$.
Since $K$ has characteristic zero, applying \cite{S} and \cite{W} we obtain that the Artinian monomial complete intersection $K$-algebra $A=K[x_1,\cdots ,x_{n-1}]/I$ where $I=(x_1^{a_1+b_1+1},x_2^{a_2+b_2+1},\cdots  ,x_{n-1}^{a_{n-1}+b_{n-1}+1})$ satisfies both WLP and SLP. We now take the homogeneous monic polynomial $G\in A$. Applying  Proposition \ref{keySLP} we conclude that $A_F=K[x_1,\cdots ,x_n]/\Ann _R(F)\cong A[x_n]/(G)$ has SLP.
\end{proof}

\begin{remark}
    In \cite{ADF+24} the authors prove that, in codimension 3, any Gorenstein $K$-algebra with Macaulay dual generator a binomial has the SLP. In \cite{ADM+}, for arbitrary codimension, they only describe families of binomials $F$ whose Gorenstein $K$-algebra $A_F$ satisfies WLP 
\end{remark}

\section{Final comments and open problems}

In this last section, we  formulate the open problems emerging along this work and we add some concluding remarks. Along the paper we have dealt with  binomial Macaulay dual generator $F$ and we analyze whether the associated Artinian Gorenstein $K$-algebra $A_F$ is a complete intersection. First we would like to point out that the   Macaulay dual generator $F$ of an Artinian complete intersection $K$-algebra $A_F$ can have an arbitrary number of monomials. Indeed, we have:

\begin{proposition}
    For any integer $s\ge 1$ there is an Artinian complete intersection $K$-algebra of codimension $c\ge 2$ whose Macaulay dual generator $F$ is the sum of $s$ monomials. Moreover, $A_F$ satisfies the SLP 
\end{proposition}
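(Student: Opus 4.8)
The plan is to realise every required example in the smallest admissible codimension, $c=2$, where two facts recalled earlier in the paper do all the work. First, as noted in the introduction, in codimension $2$ every Artinian Gorenstein standard graded $K$-algebra is automatically a complete intersection, so the complete intersection property comes for free. Second, by \cite{Br} every Artinian standard graded quotient of $K[x,y]$ has the SLP. Hence it suffices to exhibit, for each $s\ge 1$, a homogeneous polynomial $F_s\in K[X,Y]$ that is a sum of exactly $s$ monomials and for which $A_{F_s}=R/\Ann_R(F_s)$ (with $R=K[x,y]$) has codimension $2$; both conclusions of the proposition then follow at once.

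For the construction I would take, for every $s\ge 1$,
\[
F_s=XY\bigl(X^{s-1}+X^{s-2}Y+\cdots+XY^{s-2}+Y^{s-1}\bigr)=\sum_{j=0}^{s-1}X^{\,s-j}Y^{\,j+1}\in K[X,Y].
\]
This form is homogeneous of degree $s+1$; multiplying the inner sum, whose $s$ summands have pairwise distinct bidegrees, by the monomial $XY$ merely shifts exponents, so $F_s$ is a sum of exactly $s$ distinct monomials. In particular $F_1=XY$, the one-monomial (codimension $2$) case.

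The one point that genuinely needs checking — and the step I expect to be the only real obstacle — is that $\codim A_{F_s}=2$, i.e. that $F_s$ cannot be rewritten, after a linear change of coordinates, as a polynomial in a single linear form (which would drop the codimension to $1$, as happens for $F=(X+Y)^{s+1}$). To settle this I would use the factorisation: the inner factor equals $(X^s-Y^s)/(X-Y)$, so over the algebraically closed field $K$ one has $F_s=XY\prod_{\zeta}(X-\zeta Y)$, the product ranging over the $s$-th roots of unity $\zeta\ne 1$. These linear factors, together with $X$ and $Y$, are pairwise non-proportional, so $F_s$ is a product of $s+1$ distinct linear forms and is therefore not a power of a single one; consequently $\dim_K[A_{F_s}]_1=2$ and $\codim A_{F_s}=2$. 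Combining this with the two facts from the first paragraph finishes the proof. Finally, should one want examples in every codimension $c\ge 2$ rather than just $c=2$, I would tensor $A_{F_s}$ with a monomial complete intersection $K[z_1,\dots,z_{c-2}]/(z_1^{e_1+1},\dots,z_{c-2}^{e_{c-2}+1})$: its dual generator is a single monomial, so the product dual generator still has exactly $s$ monomials, the tensor factor is a complete intersection, and the SLP survives by Proposition \ref{tensor}.
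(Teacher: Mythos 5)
Your proof is correct and follows essentially the same route as the paper: a codimension-2 example built from the sum $X^{s-1}+X^{s-2}Y+\cdots+Y^{s-1}$, the fact that codimension-2 Artinian Gorenstein algebras are complete intersections, Brian\c{c}on's theorem \cite{Br} for the SLP, and a tensor product with a monomial complete intersection (via Proposition \ref{tensor}) to reach any codimension $c\ge 3$. The one genuine difference is your extra factor $XY$, and it is an improvement rather than a detour: the paper's own form $F=X_1^{s-1}+X_1^{s-2}X_2+\cdots+X_2^{s-1}$ degenerates for small $s$ --- it is a constant when $s=1$ and the linear form $X_1+X_2$ when $s=2$, so that $A_F$ then has codimension $0$ or $1$, not $2$ --- whereas your $F_s=XY\bigl(X^{s-1}+\cdots+Y^{s-1}\bigr)$ is a product of $s+1$ pairwise non-proportional linear forms for every $s\ge 1$, hence is never a power of a single linear form, its annihilator contains no linear form, and the codimension really is $2$. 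Your explicit verification of this point, which the paper skips entirely, is exactly the check that makes the argument airtight. One small omission: when you invoke Proposition \ref{tensor} for the higher-codimension case you should note that the monomial complete-intersection tensor factor itself has the SLP (by Stanley \cite{S} and Watanabe \cite{W}, as recalled in the paper), since that proposition requires \emph{both} factors to have the SLP; with that one line added, your argument is complete.
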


\begin{proof}
If $c=2$ we take $F=X_1^{s-1}+X_1^{s-2}X_2+\cdots +X_2^{s-1}$ and we get and Artinian Gorenstein $K$-algebra $A_F$ which is obviously complete intersection since in codimension 2 Gorenstein ideals and complete intersection ideals coincide. By \cite{Br}, $A_F$ has the SLP.

Assume $c\ge 3$. Take $F=F_1F_2$ where $F_1=X_1^{s-1}+X_1^{s-2}X_2+\cdots +X_2^{s-1}$ and $F_2=X_3^{a_3}\cdots X_c^{a_c}$. We have just seen that $A_{F_1}$ is a complete intersection $K$-algebra which satisfies SLP. By \cite{S} and \cite{W} we know that $A_{F_2}$ is an Artinian monomial complete intersection $K$-algebra which also satisfies SLP (Indeed, $A_{F_2}=K[x_3,\cdots ,x_c]/(x_3^{a_3+1},\dots .x_c^{a_c+1})$). Applying Proposition \ref{tensor} we conclude that $A_F=A_{F_1}\otimes _K A_{F_2}$ is an Artinian complete intersection $K$-algebra verifying SLP.
\end{proof}

It is also worthwhile to point out that for $n\ge 3$, $d\ge 2$ and $(n,d)\ne (3,3)$ the Artinian Gorenstein $K$-algebra $A_F$ with  Macaulay dual generator  a generic form $F\in S=K[X_1,\cdots ,X_n]$ of degree $d$ is never a complete intersection. Indeed, $A_F$ is a compressed Artinian Gorenstein $K$-algebra. Therefore, the Macaulay dual generators are far from being generic and we are let to pose the following problem:

\begin{Problem}
   \begin{enumerate}
      \item To characterize the forms $F\in S$ of degree $d$ such that $A_F$ is a complete intersection.
      \item Given a form $F\in S$ such that  $A_F$ is a complete intersection, prove that $A_F$ has SLP.
   \end{enumerate} 
\end{Problem}

So far, the answer to the last Problem is only known for monomials and binomials Macaulay dual generators.
and only partial results
can be found in the literature.

\vskip 2mm 
 In
this paper, we assume throughout that our work is over a field of characteristic zero. However, we want to point out that
 the characteristic of the ground field $K$ plays a very interesting role in such questions, in fact it is easy to find complete intersections that fail WLP in positive characteristic. For example (\cite{BK}) $K[x,y,z]/(x^d,y^d,z^d)$ fails WLP in characteristic $2$ if $d$ is even. More precisely, Brenner and Kaid \cite{BK} completely characterized the characteristics in which monomial complete intersections in three variables generated by monomials all having the same degree have WLP, analogously Kustin and
Vraciu \cite{KV} made this study for at least four variables and their classification was completed by Lundquist and Nicklasson \cite{LN} for at least five variables.

\end{document}